\pdfoutput=1
\documentclass[final,leqno,onefignum,onetabnum,onealgnum,oneeqnum,onethmnum]{siamart171218}
\usepackage{lipsum}
\usepackage{amsfonts}
\usepackage{amsmath}
\usepackage{amssymb}
\usepackage{bm}
\usepackage{graphicx}
\usepackage{xcolor}
\usepackage{epstopdf}
\usepackage[caption=false]{subfig}
\graphicspath{{imgs/}}
\usepackage[algo2e,ruled,vlined,resetcount,linesnumbered,algosection]{algorithm2e}
\crefname{algocf}{alg.}{algs.}
\Crefname{algocf}{Algorithm}{Algorithms}
\Crefname{Algorithm}{Algorithm}{Algorithms}
\crefformat{figure}{#2{}\scshape{Fig.} #1{}#3}
\crefformat{lemma}{#2{}\scshape{Lemma} #1{}#3}
\crefformat{theorem}{#2{}\scshape{Theorem} #1{}#3}
\crefformat{proposition}{#2{}\scshape{Proposition} #1{}#3}
\crefformat{definition}{#2{}\scshape{Definition} #1{}#3}

\Crefformat{lemma}{#2{}\scshape{Lemma} #1{}#3}
\Crefformat{theorem}{#2{}\scshape{Theorem} #1{}#3}
\Crefformat{proposition}{#2{}\scshape{Proposition} #1{}#3}
\Crefformat{definition}{#2{}\scshape{Definition} #1{}#3}

\crefformat{remark}{#2{}\scshape{Remark} #1{}#3}
\usepackage{algpseudocode}
\epstopdfDeclareGraphicsRule{.tif}{png}{.png}{convert #1 \OutputFile}
\AppendGraphicsExtensions{.tif}
\usepackage{transparent}
\usepackage{tikz}
\usetikzlibrary{positioning,calc}
\definecolor{selfDefinedColor}{rgb}{0.8,0.1,0.4}
\usepackage{mwe}
\tikzset{blankboximg/.style={remember picture,white,ultra thick,draw,inner sep=0pt,outer sep=0pt}}
\tikzset{redboximg/.style={remember picture,red,ultra thick,draw,inner sep=0pt,outer sep=0pt}}
\tikzset{blueboximg/.style={remember picture,blue,ultra thick,draw,inner sep=0pt,outer sep=0pt}}
\tikzset{selfDefinedColorboximg/.style={remember picture,selfDefinedColor,ultra thick,draw,inner sep=0pt,outer sep=0pt,dashed}}
\tikzset{greenboximg/.style={remember picture,green,ultra thick,draw,inner sep=0pt,outer sep=0pt,dashed}}
\usepackage{etoolbox}
\apptocmd{\sloppy}{\hbadness 10000\relax}{}{}
\raggedbottom

\title{Alternating minimization for a single step TV-Stokes model for image denoising}
\author{
    Bin Wu\thanks{Department of Computer Science, Electrical Engineering and Mathematical Sciences, Western Norway University of Applied Sciences, Inndalsveien 28, 5063 Bergen, Norway (\email{bin.wu@hvl.no}, \email{talal.rahman@hvl.no}).}
    \and
    Xue-Cheng Tai\thanks{Department of Mathematics, University of Bergen, All\'{e}gaten 41, 5007 Bergen, Norway (\email{xue-cheng.tai@uib.no}).}
    \and
    Talal Rahman\footnotemark[1]
}
\headers{Alternating minimization for single step TV-Stokes}{B. Wu, X. C. Tai, and T. Rahman}

\begin{document}
\maketitle

\begin{abstract}
  The paper presents a fully coupled TV-Stokes model, and propose an algorithm based on an alternating minimization of the objective functional, whose first iteration is exactly the same as the modified TV-Stokes model proposed earlier. The model is also a generalization of the second order Total Generalized Variation model. A convergence analysis is given.
\end{abstract}

\begin{keywords}
    Total variation, Denoising, Restoration
\end{keywords}

\begin{AMS}
  68Q25, 68R10, 68U05
\end{AMS}

\section{Introduction}
The ROF model or the Rudin-Osher-Fatemi model, cf. \cite{Rudin1992}, is one of the most well-known and successful models in variational image processing. It handles various problems, as for instance, denoising, inpainting, based on a total variation minimization. The model, however, suffers from a so-called staircase effect, which results in block artifacts. Among the models that alleviate the effect, are the TV-Stokes, cf. \cite{Rahman2007,Elo2009a}, and its modified variant, cf. \cite{Litvinov2011}, henceforth called the LRT model. The modified TV-Stokes model consists of two steps where the object image is reconstructed based on the vector field obtained from its first step. The vector field is achieved by a second-order minimization subject to a divergence-free condition, also known as the Stokes constraint, which implies that the vector field is a tangent field. For simplicity, we consider the gradient field and the condition becomes a curl-free condition, in other words condition for a potential function.  

The modified TV-Stokes is defined as follows. Given an observed image $f$, define its gradient as $\mathbf{n}$, and an orthogonal projector $\Pi$ as $\Pi(\mathbf{n}) = \nabla \triangle^{\dagger} \nabla \cdot (\mathbf{n})$, as in \cite{Elo2009a}. The two-step modified TV-Stokes model thus has an equivalent form, cf. \cite{Elo2009a,Rahman2007}, as follows. 
\begin{enumerate}
  \item[1.] Find the smoothed gradient field with curl-free constraint:
        \begin{eqnarray*}
          \min_{\substack{\mathbf{n} \\ \Pi \mathbf{n} = \mathbf{n}}} \vert \nabla \mathbf{n} \vert_{(\Omega)} + \frac{1}{2\delta} \Vert \mathbf{n} - \nabla f \Vert_{(\Omega)}^2.
        \end{eqnarray*}
  \item[2.] Restore the object image using the gradient field obtained from the first step:
        \begin{eqnarray*}
          \min_u \vert \nabla u - \mathbf{n} \vert_{(\Omega)} + \frac{1}{2\theta} \Vert u - f \Vert_{(\Omega)}^2,
        \end{eqnarray*}
\end{enumerate}
where $\Omega$ is a bounded open domain in $\mathbb{R}^2$. $\vert \cdot \vert_{(\Omega)}$ and $\Vert \cdot \Vert_{(\Omega)}$ are the usual L$^1$ and L$^2$ norms, respectively.
The current paper is about combining the two steps of the modified TV-Stokes into one step, coupling the two variables, that is the image and its normal field in one minimization step. It is then easier to develop an iterative regularization algorithm for the combined model. 

We organize the paper as follows: we propose the combined model and an iterative algorithm for the model in \cref{sec:prop}, give a convergence analysis in \cref{sec:con}, and finally end the paper with some remarks at the end.

\section{Proposed Model and Iterative Algorithm}\label{sec:prop}
We propose our one-step model as follows.
\begin{eqnarray}\label{eqs:proposed}
  \min_{\substack{\mathbf{n}, u \\ \Pi \mathbf{n} = \mathbf{n}}} \alpha \vert \nabla \mathbf{n} \vert_{(\Omega)} + \beta \vert\nabla u - \mathbf{n}\vert_{(\Omega)} + \frac{\eta_1}{2} \Vert \mathbf{n} - \nabla f \Vert_{(\Omega)}^2  + \frac{\eta_2}{2} \Vert u - f \Vert_{(\Omega)}^2.
\end{eqnarray}
However, we solve the model by iterations. We split the minimization problem of the model into two sub-problems by freezing one of the two variables which we then solve alternatingly in each iteration. The sub-problems are described in the following sub section.
\subsection{Sub-problems}
In the section we introduce the two sub-problems, which we solve alternatingly.
\begin{enumerate}
  \item[\bf{1:}] Find the smoothed gradient field satisfying the potential function constraint, by solving the following minimization problem:
        \begin{eqnarray}\label{eqs:sub1}
          \min_{\substack{\mathbf{n} \\ \Pi \mathbf{n} = \mathbf{n}}} \alpha \vert \nabla \mathbf{n} \vert_{(\Omega)} + \frac{\eta_1}{2} \Vert \mathbf{n} - \nabla f \Vert_{(\Omega)}^2 + \beta \vert \mathbf{n} - \nabla u \vert_{(\Omega)}.
        \end{eqnarray}
  Here $u$ is fixed, which is the solution of the second sub-problem. Initially $u$ is set equal to $f$.
  \item[\bf{2:}] Restore the given image by solving the following minimization problem:
        \begin{eqnarray}\label{eqs:sub2}
          \min_u \beta \vert\nabla u - \mathbf{n}\vert_{(\Omega)} + \frac{\eta_2}{2} \Vert u - f \Vert_{(\Omega)}^2.
        \end{eqnarray}
        Here $\mathbf{n}$ is fixed, and comes from the first step in the current iteration.
\end{enumerate}

For the first sub-problem, we turn the constrained minimization into an unconstrained minimization. In the following \Cref{lemma:equivalency1}, we prove the equivalency between the two formulations.
\begin{lemma}[Equivalency]\label{lemma:equivalency1}
  For all $\mathbf{n} \in \mathbb{R}^2$, define $\Pi$ as a projector such that $\Pi(\mathbf{n}) = \nabla \triangle^{\dagger} \nabla \cdot \mathbf{n}$. The constrained problem (\ref{eqs:sub1}) is equivalent to the following unconstrained problem
  \begin{eqnarray}\label{eqs:sub1_equivalent}
    \min_{\mathbf{n}} \alpha \vert \nabla \Pi \mathbf{n} \vert_{(\Omega)} + \frac{\eta_1}{2} \Vert \mathbf{n} - \nabla f \Vert_{(\Omega)}^2 + \beta \vert \Pi \mathbf{n} - \nabla u \vert_{(\Omega)}.
  \end{eqnarray}
\end{lemma}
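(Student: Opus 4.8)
The plan is to use the one structural fact about $\Pi$ that makes the two formulations match: $\Pi$ is the orthogonal projector of the space of vector fields (with the $L^2(\Omega)$ inner product) onto the subspace of gradient, i.e.\ curl-free, fields, this being exactly the gradient part of the Helmholtz--Hodge splitting $\mathbf{n} = \nabla\triangle^{\dagger}\nabla\cdot\mathbf{n} + (I-\Pi)\mathbf{n}$ used in \cite{Elo2009a}. From this I would record three consequences to be used throughout: $\Pi$ is idempotent, $\Pi^2=\Pi$; $\Pi$ is self-adjoint; and a field $\mathbf{n}$ satisfies the constraint $\Pi\mathbf{n}=\mathbf{n}$ if and only if it lies in the range of $\Pi$, with $(I-\Pi)\mathbf{n}$ then the $L^2$-orthogonal ``remainder''. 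I would also note that $\nabla f$ is itself a gradient field, so $\Pi(\nabla f)=\nabla f$.

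The easy inclusion is that the infimum in (\ref{eqs:sub1_equivalent}) is at most that in (\ref{eqs:sub1}): on the feasible set $\{\mathbf{n}:\Pi\mathbf{n}=\mathbf{n}\}$ the two objectives agree term by term, since substituting $\Pi\mathbf{n}=\mathbf{n}$ turns (\ref{eqs:sub1_equivalent}) into (\ref{eqs:sub1}). For the reverse direction I would take an arbitrary $\mathbf{n}$ and set $\mathbf{m}:=\Pi\mathbf{n}$, which is feasible for (\ref{eqs:sub1}) because $\Pi\mathbf{m}=\Pi^2\mathbf{n}=\Pi\mathbf{n}=\mathbf{m}$. The first and third terms are unchanged: $\nabla\Pi\mathbf{n}=\nabla\mathbf{m}=\nabla\Pi\mathbf{m}$ and $\Pi\mathbf{n}-\nabla u=\mathbf{m}-\nabla u=\Pi\mathbf{m}-\nabla u$. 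For the fidelity term, using $\Pi(\nabla f)=\nabla f$ one gets the $L^2$-orthogonal decomposition
\[
  \mathbf{n}-\nabla f = \Pi(\mathbf{n}-\nabla f) + (I-\Pi)(\mathbf{n}-\nabla f),
\]
with $\Pi(\mathbf{n}-\nabla f)=\mathbf{m}-\nabla f$, so Pythagoras yields $\Vert\mathbf{n}-\nabla f\Vert_{(\Omega)}^2 = \Vert\mathbf{m}-\nabla f\Vert_{(\Omega)}^2 + \Vert(I-\Pi)\mathbf{n}\Vert_{(\Omega)}^2 \ge \Vert\mathbf{m}-\nabla f\Vert_{(\Omega)}^2$. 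Adding the three comparisons shows the (\ref{eqs:sub1})-objective at $\mathbf{m}$ does not exceed the (\ref{eqs:sub1_equivalent})-objective at $\mathbf{n}$, and taking infima gives equality of the two values. The same inequality shows that if $\mathbf{n}$ is a minimizer of (\ref{eqs:sub1_equivalent}) then necessarily $(I-\Pi)\mathbf{n}=0$ (otherwise $\Pi\mathbf{n}$ would strictly lower the fidelity term while leaving the other two terms untouched), so every minimizer is automatically feasible and the two problems have the same solution set.

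The algebra above is short; the real work, and the step I expect to be the main obstacle, is making the functional-analytic frame precise enough to justify it: choosing the spaces so that all three terms are simultaneously finite (an $L^2$ setting for the fidelity term and the orthogonality/Pythagoras argument, together with a $BV$-type regularity so that $\vert\nabla\Pi\mathbf{n}\vert_{(\Omega)}$ is well defined), pinning down the boundary conditions built into $\triangle^{\dagger}$ so that the Helmholtz--Hodge splitting is genuinely $L^2$-orthogonal and $\Pi$ is genuinely a self-adjoint idempotent, and verifying that $\Pi$ maps the admissible class into itself with enough smoothing that $\nabla\Pi\mathbf{n}$ exists as a (bounded) measure whenever the right-hand side is finite. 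Once those properties of $\Pi$ and the identity $\Pi(\nabla f)=\nabla f$ are established, the equivalence of (\ref{eqs:sub1}) and (\ref{eqs:sub1_equivalent}), including the identification of minimizers, follows exactly as sketched.
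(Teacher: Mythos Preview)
Your argument is correct and, in fact, cleaner than the paper's. The paper does \emph{not} argue directly from the projector properties: instead it passes to the saddle-point formulation of (\ref{eqs:sub1}), introducing dual variables $\mathbf{p},\mathbf{q}$ for the two $L^1$ terms and a Lagrange multiplier $\lambda$ for the constraint $\Pi\mathbf{n}=\mathbf{n}$, writes the Euler--Lagrange equations, applies $\Pi$ to eliminate $\lambda$, and then reads off the unconstrained primal (\ref{eqs:sub1_equivalent}) from the resulting min--max. Your route bypasses all of this by observing that the only term in (\ref{eqs:sub1_equivalent}) not already written through $\Pi$ is the quadratic fidelity, and that term is monotone under projection because $\Pi\nabla f=\nabla f$ and $\Pi$ is $L^2$-orthogonal; hence replacing $\mathbf{n}$ by $\Pi\mathbf{n}$ can only decrease the objective, which immediately gives both inequalities and the characterization of minimizers. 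The paper's approach has the side benefit of producing the dual formulation (\ref{eqs:sub1_dual}) used later for the Chambolle-type iteration, while yours is a self-contained primal argument that makes the role of $\Pi\nabla f=\nabla f$ and the Pythagorean identity explicit. The functional-analytic caveats you flag (boundary conditions in $\triangle^{\dagger}$, the space on which $\Pi$ is self-adjoint and idempotent, finiteness of the TV terms) are not addressed in the paper's proof either, so you are not missing anything the paper supplies.
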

\begin{proof}
  Let $\mathbf{p}$ and $\mathbf{q}$ be the dual variables such that $\mathbf{p} \in C_c^1(\Omega, \mathbb{R}^4)$ and $\mathbf{q} \in C_c^1(\Omega, \mathbb{R}^2)$, with $\Vert \mathbf{p} \Vert_{\infty}$ and $\Vert \mathbf{q} \Vert_{\infty} \leq 1$, the minimization problem (\ref{eqs:sub1}) is thus
  \begin{eqnarray}\label{eqs:sub1_reform}
    \min_{\mathbf{n}} \max_{\substack{\mathbf{p}, \mathbf{q} \\ \vert \cdot \vert \leq 1}} \alpha \langle \mathbf{n}, \nabla \cdot \mathbf{p} \rangle + \frac{\eta_1}{2} (\mathbf{n} - \nabla f)^2 + \beta\langle \mathbf{n} - \nabla u, \mathbf{q} \rangle + \lambda \cdot (\Pi \mathbf{n} - \mathbf{n}),
  \end{eqnarray}
  where $\lambda$ is the Lagrange multiplier. By the min-max theorem, cf. \cite{Sion1957}, we can reformulate the min-max into max-min, above. The corresponding Euler-Lagrange equations become
  \begin{eqnarray}\label{eqs:el_sub_1}
    \alpha \nabla \cdot \mathbf{p} + \eta_1 (\mathbf{n} - \nabla f) + \beta \mathbf{q} + (\Pi - I) \lambda = 0,
  \end{eqnarray} and
  \begin{eqnarray*}
    \Pi \mathbf{n} - \mathbf{n} = 0.
  \end{eqnarray*}
  Applying the $\Pi$ projection on both sides of (\ref{eqs:el_sub_1}), we obtain the following relation.
  \begin{eqnarray*}
    \eta_1 (\mathbf{n} - \nabla f) = - \alpha \Pi \nabla \cdot \mathbf{p} - \beta \Pi \mathbf{q}.
  \end{eqnarray*} Substituting back in (\ref{eqs:el_sub_1}), we obtain
  \begin{eqnarray*}
    (\Pi - I) \lambda =  - \alpha (I - \Pi) \nabla \cdot \mathbf{p} - \beta (I - \Pi) \mathbf{q}.
  \end{eqnarray*}
  The problem (\ref{eqs:sub1_reform}) is accordingly
  \begin{eqnarray*}
    \min_{\mathbf{n}}\max_{\substack{\mathbf{p}, \mathbf{q} \\ \vert \cdot \vert \leq 1}} \alpha \langle \mathbf{n}, \Pi \nabla \cdot \mathbf{p}\rangle + \frac{\eta_1}{2} (\mathbf{n} - \nabla f)^2 + \beta\langle \mathbf{n}, \Pi \mathbf{q} \rangle - \beta\langle \nabla u, \mathbf{q} \rangle,
  \end{eqnarray*} which is equivalent to the following primal problem:
  \begin{eqnarray*}
    \min_{\mathbf{n}} \alpha \vert\nabla \Pi \mathbf{n}\vert_{(\Omega)} + \frac{\eta_1}{2} \Vert \mathbf{n} - \nabla f \Vert_{(\Omega)}^2 + \beta \vert\Pi \mathbf{n} - \nabla u\vert_{(\Omega)}.
  \end{eqnarray*}\qed
\end{proof}

\subsection{Algorithm}
We solve the optimization problem by the alternating minimization method, cf. \cite{Ortega1987,Bertsekas1997}, where the method is also known as block-nonlinear Gauss–Seidel method or the block coordinate descent method, cf. \cite{Bertsekas1999} and cf. \cite{Beck2015} for a review. The algorithm is given as follows (\cref{alg:proposed}).

\begin{algorithm}[H]
  \caption{Alternating Minimization TV-Stokes}\label{alg:proposed}
  \SetKw{Initialize}{initialize}
    \Initialize{$k = 0$, $u_0 = f$\;}\\
    \Repeat{satisfied}{
    $k = k + 1$\;\\
    $\mathbf{n}_k = \mbox{arg} \min_{\mathbf{n}} \{ \alpha \vert\nabla \Pi \mathbf{n}\vert(\Omega) + \frac{\eta_1}{2} \Vert \mathbf{n} - \nabla f \Vert_{(\Omega)}^2 + \beta \vert\Pi \mathbf{n} - \nabla u_{k-1}\vert(\Omega) \}$\;\\
    $u_k = \mbox{arg} \min_{u} \{ \beta \vert\nabla u - \mathbf{n}_k\vert(\Omega) + \frac{\eta_2}{2} \Vert u-f\Vert_{(\Omega)}^2 \} $\;
    }
    \Return{$u$.}
\end{algorithm}

\subsection{Solving Sub-problem 1 for \texorpdfstring{$\mathbf{n}$}{n}}
We adopt Chambolle's semi-implicit algorithm \cite{Chambolle1997} to solve the proposed sub-prob-\allowbreak lems. The dual problem for the sub-problem (\ref{eqs:sub1_equivalent}) is as follows: 
\begin{eqnarray}\label{eqs:sub1_dual}
  \min_{\mathbf{n}}\max_{\substack{\mathbf{p}, \mathbf{q} \\ \vert \cdot \vert \leq 1}} \alpha \langle \mathbf{n}, \Pi \nabla \cdot \mathbf{p}\rangle + \frac{\eta_1}{2} (\mathbf{n} - \nabla f)^2 + \beta\langle \mathbf{n}, \Pi \mathbf{q} \rangle  - \beta\langle \nabla u, \mathbf{q} \rangle.
\end{eqnarray}
We first consider the minimization by the min-max theorem, cf. \cite{Sion1957}, which gives us the optimal condition on $\mathbf{n}$ as follows.
\begin{eqnarray}\label{eqs:sub1_min}
  \mathbf{n} = \nabla f - \frac{\alpha}{\eta_1} \Pi \nabla \cdot \mathbf{p} - \frac{\beta}{\eta_1} \Pi \mathbf{q}.
\end{eqnarray}
Accordingly the maximization problem is
\begin{eqnarray}\label{eqs:sub1_max}
  \max_{\substack{\mathbf{p}, \mathbf{q} \\ \vert \cdot \vert \leq 1}} \langle \nabla f, \alpha \Pi \nabla \cdot \mathbf{p} + \beta \Pi \mathbf{q} \rangle - \frac{1}{2\eta_1} (\alpha \Pi \nabla \cdot \mathbf{p} + \beta \Pi \mathbf{q})^2 - \beta\langle \nabla u, \mathbf{q} \rangle.
\end{eqnarray}
The line searching directions $\mathbf{r}$ for $\mathbf{p}$ and $\mathbf{q}$ in Chambolle's semi-implicit algorithm are respectively
\begin{eqnarray}\label{eqs:sub1_gp}
  \mathbf{r_p} = \nabla (\frac{\alpha}{\eta_1} \Pi \nabla \cdot \mathbf{p} + \frac{\beta}{\eta_1} \Pi \mathbf{q} - \nabla f)
\end{eqnarray} and
\begin{eqnarray}\label{eqs:sub1_gq}
  \mathbf{r_q} = \nabla f - \nabla u - \frac{\alpha}{\eta_1} \Pi \nabla \cdot \mathbf{p} - \frac{\beta}{\eta_1} \Pi \mathbf{q}.
\end{eqnarray}

\subsection{Solving Sub-problem 2 for \texorpdfstring{$u$}{u}}
We use the same approach as for the sub-problem 1, but this time for the scalar function $u$. Accordingly, the dual problem of the sub-problem (\ref{eqs:sub2}) is as follows:
\begin{eqnarray}\label{eqs:sub2_dual}
  \min_u \max_{\mathbf{s}} \beta \langle \nabla u - \mathbf{n}, - \mathbf{s} \rangle + \frac{\eta_2}{2} (u - f)^2,
\end{eqnarray} where $\mathbf{s} \in C_c^1(\Omega, \mathbb{R}^2)$ and $\Vert \cdot \Vert_{\infty} \leq 1$.
The minimization problem results in optimal $u$ as follows:
\begin{eqnarray}\label{eqs:sub2_min}
  u = f - \frac{\beta}{\eta_2} \nabla \cdot \mathbf{s},
\end{eqnarray} The maximization problem becomes
\begin{eqnarray}\label{eqs:sub2_max}
  \max_{\mathbf{s}}
  \beta \langle f, \nabla \cdot \mathbf{s} \rangle + \beta \langle \mathbf{n}, \mathbf{s} \rangle - \frac{\beta^2}{2\eta_2} (\nabla \cdot \mathbf{s})^2.
\end{eqnarray}
The line search direction $\mathbf{r}$ for $\mathbf{s}$ is
\begin{eqnarray}\label{eqs:sub2_gq}
  \mathbf{r_s} = \nabla(\frac{\beta}{\eta_2} \nabla \cdot \mathbf{s} - f) + \mathbf{n}.
\end{eqnarray}

\section{Convergence Analysis}\label{sec:con}
We prove the convergence following the approach in the paper \cite{Beck2015}.

\subsection{Definitions and Assumptions}
Define the vector $\mathbf{x} = (\mathbf{n},u)$ such that $\mathbf{n} \in \mathbb{R}^2(\Omega) \cap \{\mathbf{n}:\Pi \mathbf{n} = \mathbf{n}\}$ and $u \in \mathbb{R}(\Omega)$. Rewrite the problem (\ref{eqs:proposed}) as:
\begin{eqnarray}\label{eqs:g_and_l}
  \min_{\mathbf{x}} \{ H(\mathbf{x}) := g(\mathbf{x}) + l(\mathbf{x}) \},
\end{eqnarray} where $g(\mathbf{x}) := g(\mathbf{n}, u) = \alpha \vert \nabla \Pi \mathbf{n} \vert_{(\Omega)} + \beta \vert\nabla u - \Pi \mathbf{n} \vert_{(\Omega)}$ and $l(\mathbf{x}) := l(\mathbf{n}, u) = \frac{\eta_1}{2} \Vert \mathbf{n} - \nabla f \Vert_{(\Omega)}^2  + \frac{\eta_2}{2} \Vert u - f \Vert_{(\Omega)}^2$.

Define functions $g_1$, $g_2$, $l_1$ and $l_2$ respectively as
\begin{align*}
  g_1(\mathbf{n}) & := \alpha \vert \nabla \Pi \mathbf{n} \vert_{(\Omega)} + \beta \vert\Pi  \mathbf{n} - \nabla u\vert_{(\Omega)}, \\
  g_2(u) & := \beta \vert\nabla u - \Pi \mathbf{n}\vert_{(\Omega)}, \\
  l_1(\mathbf{n}) &  := \frac{\eta_1}{2} \Vert \mathbf{n} - \nabla f \Vert_{(\Omega)}^2, \\
  l_2(u) & := \frac{\eta_2}{2} \Vert u - f\Vert_{(\Omega)}^2.
\end{align*}
We assume that $g_1$, $g_2$, $l_1$ and $l_2$ satisfy the following properties:
\begin{itemize}
  \item [(a)]
        Functions $g_1:\mathbb{R}^2(\Omega)\rightarrow(-\infty, \infty)$ and $g_2:\mathbb{R}(\Omega)\rightarrow(-\infty, \infty)$ are closed and proper convex and subdifferentiable.
  \item [(b)]
        Functions $l_1$ and $l_2$ are continuously differentiable convex functions over domains $\mbox{dom}$$(g_1)$ and $\mbox{dom}$$(g_2)$ respectively. The partial derivatives of $l$ with respect to $\mathbf{n}$ and $u$ are denoted as $\nabla_1 l(\mathbf{x})$ and $\nabla_2 l(\mathbf{x})$ respectively.
  \item [(c)]
        The gradient of $l$ is Lipschitz continuous with respect to $\mathbf{n}$ over domain $\mbox{dom}$$(g_1)$ with a constant $L_1 \in (0, \infty)$ and with respect to $u$ over domain $\mbox{dom}$$(g_2)$ with a constant $L_2 \in (0, \infty)$ as follows
        \begin{eqnarray*}
          \Vert \nabla_1 l(\mathbf{n} + \mathbf{d}_1, u) - \nabla_1 l(\mathbf{n}, u) \Vert_{(\Omega)} \leq L_1 \Vert \mathbf{d}_1 \Vert_{(\Omega)},
        \end{eqnarray*} and
        \begin{eqnarray*}
          \Vert \nabla_2 l(\mathbf{n}, u + d_2) - \nabla_2 l(\mathbf{n}, u) \Vert_{(\Omega)} \leq L_2 \Vert d_2\Vert_{(\Omega)},
        \end{eqnarray*} where $\mathbf{d}_1 \in \mathbb{R}^2(\Omega)$ and $u \in \mathbb{R}(\Omega)$ such that $\mathbf{n} + \mathbf{d}_1 \in \mbox{dom}$$(g_1)$ and $u + d_2 \in \mbox{dom}$$(g_2)$.
  \item [(d)]
        The optimal solution set of problem (\ref{eqs:g_and_l}), denoted by $X^*$, is nonempty, and the corresponding energy value of $H(\mathbf{x})$ is $H^*$. In addition, the solution sets for sub-problems are also nonempty.
\end{itemize}
Using the notations above, we reformulate \Cref{alg:proposed} as follows.

\begin{algorithm}[H]
  \caption{Alternating Minimization Method (Reformulated)} \label{alg:proposed_2}
  \SetKwBlock{Initialize}{initialize}{}
    \Initialize{
    $k = 0$\;\\
    $\mathbf{n}_0 \in \mbox{dom}$ $g_1$, $u_0 = f \in \mbox{dom}$ $g_2$, such that $u_0 \in \mbox{argmin}_{u \in BV(\Omega)}$ $ g_2(\mathbf{n}_0,u) + l_2(\mathbf{n}_0,u)$\;}
    \Repeat{satisfied}{
    $k = k + 1$\;\\
    $\mathbf{n}_k = \mbox{arg} \min_{\mathbf{n} \in BV^2(\Omega)} g_1(\mathbf{n},u_{k-1}) + l_1(\mathbf{n})$\;\\
    $u_k = \mbox{arg} \min_{u \in BV(\Omega)} g_2(\mathbf{n}_k,u) + l_2(u)$\;}{}
    \Return{$u.$}
\end{algorithm}

The $k$-th iteration reads as $\mathbf{x}_k = (\mathbf{n}_k,u_k)$, and the $(k+1)$-th iteration as $\mathbf{x}_{k+1} = (\mathbf{n}_{k+1},u_{k+1})$. The intermediate iteration is defined as half iteration, i.e., $\mathbf{x}_{k+1/2} = (\mathbf{n}_{k+1},u_k)$. The sequence generated by the optimal problems has the relation:
\begin{eqnarray*}
  H(\mathbf{x}_0) \ge H(\mathbf{x}_{1/2}) \ge H(\mathbf{x}_1) \ge H(\mathbf{x}_{3/2}) \ge \ldots
\end{eqnarray*}

\subsection{Preliminaries}
Three concepts are associated with the convergence analysis of the proposed algorithm: the proximal mapping, the gradient mapping, and the block descent lemma.

Define proximal operator $\mbox{prox}_h(\cdot)$ as
\begin{eqnarray*}
  \mbox{prox}_h(\mathbf{z}) = \mbox{arg}\min_y h(\mathbf{y}) + \frac{1}{2}\Vert\mathbf{y} - \mathbf{z}\Vert^2,
\end{eqnarray*}
where $h:\mathbb{R}^n\rightarrow(-\infty,\infty]$ is a closed, proper convex function. A lemma on proximal mapping, cf. \cite{Beck2015}, is useful in the our proof of convergence.
\begin{lemma}\label{lemma:proximal}
  Let $h:\mathbb{R}^n\rightarrow(-\infty,\infty]$ be a closed, proper and convex function, $M > 0$, and $\mathbf{v} = \mbox{prox}_{\frac{1}{M}h}(\mathbf{z})$. Thus
  if $\mathbf{y} \in \mbox{dom}(h)$ then
  \begin{eqnarray*}
    h(\mathbf{y}) \ge h(\mathbf{v}) + M\langle \mathbf{z} - \mathbf{v}, \mathbf{y} - \mathbf{v} \rangle.
  \end{eqnarray*}
\end{lemma}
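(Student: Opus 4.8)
The plan is to exploit the first-order optimality condition of the strongly convex proximal subproblem and then convert it into the claimed inequality using convexity of $h$. First I would recall that $\mathbf{v} = \mathrm{prox}_{\frac1M h}(\mathbf{z})$ means $\mathbf{v}$ is the unique minimizer of the function $\phi(\mathbf{y}) := h(\mathbf{y}) + \frac{M}{2}\Vert \mathbf{y} - \mathbf{z}\Vert^2$ (after multiplying the defining objective by $M$, which does not change the minimizer). Since $h$ is closed, proper and convex and the quadratic term is differentiable and $M$-strongly convex, $\phi$ is $M$-strongly convex and its subdifferential at $\mathbf{v}$ contains $0$; that is,
\begin{eqnarray*}
  0 \in \partial h(\mathbf{v}) + M(\mathbf{v} - \mathbf{z}),
\end{eqnarray*}
so $M(\mathbf{z} - \mathbf{v}) \in \partial h(\mathbf{v})$.

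Next I would apply the subgradient inequality for the convex function $h$ at the point $\mathbf{v}$ with the subgradient $M(\mathbf{z}-\mathbf{v})$: for every $\mathbf{y} \in \mathrm{dom}(h)$,
\begin{eqnarray*}
  h(\mathbf{y}) \ge h(\mathbf{v}) + \langle M(\mathbf{z} - \mathbf{v}), \mathbf{y} - \mathbf{v}\rangle = h(\mathbf{v}) + M\langle \mathbf{z} - \mathbf{v}, \mathbf{y} - \mathbf{v}\rangle,
\end{eqnarray*}
which is exactly the asserted inequality. (If one prefers to avoid subdifferential calculus explicitly, the same conclusion follows by comparing $\phi(\mathbf{v})$ with $\phi(\mathbf{v} + t(\mathbf{y} - \mathbf{v}))$ for $t \in (0,1]$, using convexity of $h$ to bound $h(\mathbf{v} + t(\mathbf{y}-\mathbf{v})) \le (1-t)h(\mathbf{v}) + t\,h(\mathbf{y})$, expanding the quadratic term, dividing by $t$, and letting $t \to 0^+$; this yields $0 \ge h(\mathbf{v}) - h(\mathbf{y}) + M\langle \mathbf{v} - \mathbf{z}, \mathbf{v} - \mathbf{y}\rangle$, i.e. the same bound.)

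The only genuine subtlety, and the step I would be most careful about, is the validity of the optimality condition $0 \in \partial h(\mathbf{v}) + M(\mathbf{v}-\mathbf{z})$: this uses that the subdifferential of a sum splits as $\partial(h + \frac{M}{2}\Vert\cdot - \mathbf{z}\Vert^2) = \partial h + M(\cdot - \mathbf{z})$, which is justified because the quadratic term is finite and differentiable everywhere (Moreau--Rockafellar sum rule applies with no constraint qualification issue). Existence and uniqueness of $\mathbf{v}$ itself is standard from $M$-strong convexity and closedness of $h$. Everything else is the elementary subgradient inequality, so there is no real obstacle beyond bookkeeping; the argument is essentially the textbook characterization of the proximal operator, included here for completeness since it is invoked repeatedly in the convergence analysis.
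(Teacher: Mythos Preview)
Your proof is correct and follows essentially the same route as the paper: derive the optimality condition $0\in\partial h(\mathbf{v})+M(\mathbf{v}-\mathbf{z})$ for the proximal subproblem and then apply the subgradient inequality at $\mathbf{v}$. The additional remarks you make about the Moreau--Rockafellar sum rule and the alternative limiting argument are sound but go beyond what the paper records.
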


\begin{proof}
  The optimal condition for the convex functional $h(\mathbf{y}) + \frac{M}{2} \Vert\mathbf{y} - \mathbf{z} \Vert^2$ is $\partial h(\mathbf{v}) + M (\mathbf{v} - \mathbf{z}) \ni 0$, where $\partial h(\mathbf{v})$ is the subgradient at $\mathbf{v}$. By the definition of subgradient, it follows that
  \begin{eqnarray*}
    h(\mathbf{y}) \ge h(\mathbf{v}) + M\langle \mathbf{z} - \mathbf{v}, \mathbf{y} - \mathbf{v} \rangle,
  \end{eqnarray*}
  and hence the proof.\qed
\end{proof}

Now define prox-grad mapping $T_M(\cdot)$ as $T_M(\mathbf{y}) = \mbox{prox}_{\frac{1}{M}h}(\mathbf{y} - \frac{1}{M}\nabla f(\mathbf{y}))$ for $M > 0$ associated with $h(\mathbf{y}) + f(\mathbf{y}) $ where $f(\mathbf{y}) = \frac{M}{2} \Vert\mathbf{y} - \mathbf{z} \Vert^2$, cf. \cite{Nesterov2004}. The corresponding gradient mapping is defined as $G_M(\mathbf{y}) = M(\mathbf{y} - T_M(\mathbf{y})) = M(\mathbf{y} - \mbox{prox}_{\frac{1}{M}h}(\mathbf{y} - \frac{1}{M}\nabla f(\mathbf{y})))$.
Note that if $G_M(\mathbf{y}) = 0$ for some $M > 0$, $\mathbf{y}$ is an optimal solution. For the problem (\ref{eqs:g_and_l}), the gradient mapping is accordingly
\begin{eqnarray*}
  G_M(\mathbf{x}) = M(\mathbf{x} - T_M(\mathbf{x})) = M(\mathbf{x} - \mbox{prox}_{\frac{1}{M}g}(\mathbf{x} - \frac{1}{M}\nabla l(\mathbf{x}))),
\end{eqnarray*} where $M\in \{ \eta_1, \eta_2 \}$, is the parameter corresponding to $\mathbf{n}$ and $u$. The partial gradient mappings are therefore
\begin{eqnarray*}
  G_M^1(\mathbf{x}) & = \eta_1 (\mathbf{n} - T_{\eta_1}(\mathbf{n})) = \eta_1(\mathbf{n} - \mbox{prox}_{\frac{1}{\eta_1}g_1}(\mathbf{n} - \frac{1}{\eta_1}\nabla l_1(\mathbf{n}))), \\ G_M^2(\mathbf{x}) & = \eta_2 (u - T_{\eta_2}(u) = \eta_2(u - \mbox{prox}_{\frac{1}{\eta_1}g_2}(u - \frac{1}{\eta_2}\nabla l_2(u))),
\end{eqnarray*} where
\begin{eqnarray*}
  G_M(\mathbf{x}) = (G_M^1(\mathbf{x}),G_M^2(\mathbf{x})).
\end{eqnarray*}

\begin{lemma}[block descent lemma]\label{lemma:blockdescent}
  For all $M_1 > L_1$ and $M_2 > L_2$, and the assumption (c), the following relations hold.
  \begin{eqnarray*}
    l(\mathbf{n} + \mathbf{d_1}, u) & \le l(\mathbf{n}, u) + \langle \nabla_1 l(\mathbf{n}, u), \mathbf{d_1} \rangle + \frac{M_1}{2} \Vert \mathbf{d_1} \Vert^2, \\ l(\mathbf{n}, u + d_2) & \le l(\mathbf{n}, u) + \langle \nabla_2 l(\mathbf{n}, u), d_2 \rangle + \frac{M_2}{2} \Vert d_2 \Vert^2.
  \end{eqnarray*}
\end{lemma}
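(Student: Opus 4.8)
The statement is the classical descent lemma (quadratic upper bound for a function with Lipschitz gradient), applied separately to each of the two coordinate blocks, so the plan is to prove one inequality and note that the other follows by an identical argument with the roles of $\mathbf{n}$ and $u$ interchanged. Fix $u$ and $\mathbf{d_1}$ with $\mathbf{n}, \mathbf{n}+\mathbf{d_1} \in \mathrm{dom}(g_1)$; since $g_1$ is convex, $\mathrm{dom}(g_1)$ is convex, so the whole segment $\mathbf{n}+t\mathbf{d_1}$, $t\in[0,1]$, lies in $\mathrm{dom}(g_1)$, where by assumption (b) the map $t\mapsto \varphi(t):=l(\mathbf{n}+t\mathbf{d_1},u)$ is continuously differentiable with $\varphi'(t)=\langle \nabla_1 l(\mathbf{n}+t\mathbf{d_1},u),\mathbf{d_1}\rangle$.

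First I would write, by the fundamental theorem of calculus,
\begin{eqnarray*}
  l(\mathbf{n}+\mathbf{d_1},u) - l(\mathbf{n},u) = \int_0^1 \langle \nabla_1 l(\mathbf{n}+t\mathbf{d_1},u),\mathbf{d_1}\rangle\, dt,
\end{eqnarray*}
and then add and subtract the constant term $\langle \nabla_1 l(\mathbf{n},u),\mathbf{d_1}\rangle = \int_0^1 \langle \nabla_1 l(\mathbf{n},u),\mathbf{d_1}\rangle\, dt$ to get
\begin{eqnarray*}
  l(\mathbf{n}+\mathbf{d_1},u) - l(\mathbf{n},u) - \langle \nabla_1 l(\mathbf{n},u),\mathbf{d_1}\rangle = \int_0^1 \langle \nabla_1 l(\mathbf{n}+t\mathbf{d_1},u) - \nabla_1 l(\mathbf{n},u),\mathbf{d_1}\rangle\, dt.
\end{eqnarray*}
Next I would bound the integrand by Cauchy--Schwarz and then invoke the Lipschitz estimate from assumption (c), namely $\Vert \nabla_1 l(\mathbf{n}+t\mathbf{d_1},u) - \nabla_1 l(\mathbf{n},u)\Vert_{(\Omega)} \le L_1\, t\,\Vert \mathbf{d_1}\Vert_{(\Omega)}$, so that the right-hand side is at most $\int_0^1 L_1 t \Vert \mathbf{d_1}\Vert^2\, dt = \frac{L_1}{2}\Vert \mathbf{d_1}\Vert^2$. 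Since $M_1 > L_1$ and $\Vert \mathbf{d_1}\Vert^2 \ge 0$, this yields the first claimed inequality; the second follows verbatim with $l(\mathbf{n},\cdot)$, $\nabla_2 l$, $L_2$, $M_2$ in place of their block-$1$ counterparts.

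I do not expect a serious obstacle here, as the argument is routine; the only points needing care are the justification that the segment joining $\mathbf{n}$ and $\mathbf{n}+\mathbf{d_1}$ stays inside $\mathrm{dom}(g_1)$ (handled by convexity of the domain) and the legitimacy of the fundamental theorem of calculus and of pulling the inner product inside the integral in the function spaces at hand, which follow from the continuous differentiability assumed in (b) together with the continuity of the pairing. If one wants the slightly sharper conclusion, the same computation in fact gives the inequalities with $L_1$ and $L_2$ themselves, and the stated versions with strict $M_i > L_i$ are an immediate weakening that is the form actually used in the subsequent convergence estimates.
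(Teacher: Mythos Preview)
Your argument is correct and is the standard integral-form proof of the descent lemma. The paper takes a slightly different route: it writes a second-order Taylor expansion with Lagrange remainder, so that the error term is $\tfrac{1}{2}\mathbf{d_1}^{\top}\nabla_1\nabla_1 l(\tilde{\mathbf{n}},u)\,\mathbf{d_1}$ for some intermediate point $\tilde{\mathbf{n}}$, and then bounds this Hessian quadratic form by $\tfrac{M_1}{2}\Vert\mathbf{d_1}\Vert^2$ via the Lipschitz assumption. The two proofs arrive at the same inequality, but your approach is arguably better matched to the stated hypotheses: assumption~(b) only gives $l$ continuously differentiable, so invoking a Hessian is, strictly speaking, an extra smoothness requirement (harmless here because $l$ is explicitly quadratic, but not justified by~(b) alone). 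Your fundamental-theorem-of-calculus argument uses exactly the $C^1$ regularity plus the Lipschitz gradient from~(c), and as you note it even yields the sharper constant $L_i$ before relaxing to $M_i>L_i$.
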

\begin{proof}
  By the Taylor expansion, we have
  \begin{eqnarray*}
    l(\mathbf{n} + \mathbf{d_1}, u) & = l(\mathbf{n}, u) + \langle \nabla_1 l(\mathbf{n}, u), \mathbf{d_1} \rangle + \mathbf{d_1}^{\top} \frac{ \nabla_1 \nabla_1 l(\tilde{\mathbf{n}}, u) }{2} \mathbf{d_1}, \\ l(\mathbf{n}, u + d_2) & = l(\mathbf{n}, u) + \langle \nabla_2 l(\mathbf{n}, u), d_2 \rangle + \frac{\nabla_2 \nabla_2 l(\mathbf{n}, \tilde{u})}{2} \Vert d_2 \Vert^2,
  \end{eqnarray*}
  where $\tilde{\mathbf{n}}$ is some vector in the box $[\mathbf{n},\mathbf{n} + \mathbf{d_1}]$, and $\tilde{u}$ is some scalar value in $[u, u + d_2]$. By the Lipschitz assumption, with $M_1 > L_1$ and $M_2 > L_2$, the following relations thus hold:
  \begin{eqnarray*}
    l(\mathbf{n} + \mathbf{d_1}, u) & \le l(\mathbf{n}, u) + \langle \nabla_1 l(\mathbf{n}, u), \mathbf{d_1} \rangle + \frac{M_1}{2} \Vert \mathbf{d_1} \Vert^2, \\ l(\mathbf{n}, u + d_2) & \le l(\mathbf{n}, u) + \langle \nabla_2 l(\mathbf{n}, u), d_2 \rangle + \frac{M_2}{2} \Vert d_2 \Vert^2.
  \end{eqnarray*}
  The proof follows.\qed
\end{proof}

\begin{lemma}[sufficient decrease, cf. \cite{Beck2010,Beck2015}]\label{lemma:sff_decrease}
 \begin{sloppypar}
  Let $s \in C^{L,1}(\Omega, \mathbb{R}^p \rightarrow \mathbb{R})$, $h: \mathbb{R}^p \rightarrow (-\infty, \infty]$ be a closed, proper, and convex sub-differentiable function, and $S(\cdot) = s(\cdot) + h(\cdot)$, then
  \end{sloppypar}
  \begin{eqnarray*}
    S(x) - S(\mbox{prox}_{\frac{1}{L}h} ( x - \frac{\nabla s(x)}{L} ) ) \ge \frac{1}{2L} \Vert L ( x - \mbox{prox}_{\frac{1}{L}h} ( x - \frac{\nabla s(x)}{L} ) ) \Vert^2. 
  \end{eqnarray*}
\end{lemma}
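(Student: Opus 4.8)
The plan is to run the classical three-line argument for a single proximal-gradient step, combining the descent inequality for the smooth part $s$ with the subgradient inequality coming from the prox of $h$. Write $x^{+} := \operatorname{prox}_{\frac{1}{L}h}\bigl(x - \frac{1}{L}\nabla s(x)\bigr)$, so that, since $\frac{1}{2L}\Vert L(x - x^{+})\Vert^{2} = \frac{L}{2}\Vert x - x^{+}\Vert^{2}$, the claim is equivalent to $S(x) - S(x^{+}) \ge \frac{L}{2}\Vert x - x^{+}\Vert^{2}$. I also take $x \in \operatorname{dom}(h)$, which is implicit (otherwise $S(x) = +\infty$ and there is nothing to prove) and is what is needed to apply \Cref{lemma:proximal} at the point $\mathbf{y} = x$.

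First I would bound the smooth part. Because $s \in C^{L,1}$, the descent lemma (the same estimate as in \Cref{lemma:blockdescent}, but with the constant taken equal to the Lipschitz constant $L$) gives
\[
  s(x^{+}) \le s(x) + \langle \nabla s(x),\, x^{+} - x \rangle + \tfrac{L}{2}\Vert x^{+} - x \Vert^{2}.
\]
Second I would bound the nonsmooth part by applying \Cref{lemma:proximal} to $h$ with $M = L$ and $\mathbf{z} = x - \frac{1}{L}\nabla s(x)$, for which $\mathbf{v} = x^{+}$ by definition; choosing $\mathbf{y} = x \in \operatorname{dom}(h)$ yields
\[
  h(x) \ge h(x^{+}) + L\bigl\langle x - \tfrac{1}{L}\nabla s(x) - x^{+},\; x - x^{+}\bigr\rangle
       = h(x^{+}) + L\Vert x - x^{+}\Vert^{2} - \langle \nabla s(x),\, x - x^{+}\rangle .
\]

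Third, I would add the two estimates. The cross terms $\langle \nabla s(x), x^{+} - x\rangle$ and $\langle \nabla s(x), x - x^{+}\rangle$ cancel, leaving
\[
  S(x^{+}) = s(x^{+}) + h(x^{+}) \le s(x) + h(x) + \tfrac{L}{2}\Vert x - x^{+}\Vert^{2} - L\Vert x - x^{+}\Vert^{2}
  = S(x) - \tfrac{L}{2}\Vert x - x^{+}\Vert^{2},
\]
which is exactly the asserted inequality after rearrangement and the identity $\frac{L}{2}\Vert x - x^{+}\Vert^{2} = \frac{1}{2L}\Vert L(x - x^{+})\Vert^{2}$. The only delicate point — and it is minor — is that I use the descent inequality with constant exactly $L$ rather than with some $M > L$ as stated in \Cref{lemma:blockdescent}; this is justified directly since $s \in C^{L,1}$ (integrate $\nabla s$ along the segment $[x, x^{+}]$ and use the Lipschitz bound), or one may instead apply \Cref{lemma:blockdescent} with $M = L + \varepsilon$ and let $\varepsilon \downarrow 0$. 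Everything else is bookkeeping, so I do not expect any real obstacle.
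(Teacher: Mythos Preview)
Your proposal is correct and follows essentially the same route as the paper: the paper also combines the descent inequality for $s$ (via \Cref{lemma:blockdescent}) with the prox/subgradient inequality for $h$ and lets the cross terms cancel. The only cosmetic differences are that the paper treats the nonsmooth part first and re-derives the prox inequality inline from the optimality condition rather than citing \Cref{lemma:proximal}; your invocation of \Cref{lemma:proximal} is in fact cleaner, and your remark about using the constant $L$ (rather than $M>L$) is handled the same way in spirit.
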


\begin{proof}
  Since $h$ is convex and sub-differentiable, we have
  \begin{eqnarray*}
      h(x) \ge h( \mbox{prox}_{\frac{h}{L}} ( x - \frac{\nabla s(x)}{L} ) ) + (x - \mbox{prox}_{\frac{h}{L}} ( x - \frac{\nabla s(x)}{L}  ) ) \partial h(\mbox{prox}_{\frac{h}{L}} ( x - \frac{\nabla s(x)}{L} ) ) ).
  \end{eqnarray*}
  The optimal condition for the minimization of $\mbox{prox}_{\frac{1}{L}h} ( x - \frac{1}{L} \nabla s(x) )$ gives
  \begin{eqnarray*}
    & \partial h( \mbox{prox}_{\frac{1}{L}h} ( x - \frac{1}{L}\nabla s(x) ) ) \ni L ( x - \frac{1}{L}\nabla s(x) - \mbox{prox}_{\frac{1}{L}h} ( x - \frac{1}{L}\nabla s(x) ) ).
  \end{eqnarray*}
  We therefore have the following inequality
  \begin{eqnarray*}
    h(x) \ge h( \mbox{prox}_{\frac{h}{L}} ( x - \frac{\nabla s(x)}{L} ) ) + \frac{1}{L} \Vert L ( x - \mbox{prox}_{\frac{h}{L}} ( x - \frac{\nabla s(x)}{L}  ) ) \Vert^2 &  \\ - (x - \mbox{prox}_{\frac{h}{L}} ( x - \frac{\nabla s(x)}{L} )) \nabla s(x). &
  \end{eqnarray*}
  Further, we obtain
  \begin{eqnarray*}
    S(x) - S( \mbox{prox}_{\frac{1}{L}h} ( x - \frac{1}{L}\nabla s(x) ) ) & \ge s(x)  - s( \mbox{prox}_{\frac{1}{L}h} ( x - \frac{1}{L}\nabla s(x) ) ) \\ + & \frac{1}{L} \Vert L ( x - \mbox{prox}_{\frac{1}{L}h} ( x - \frac{1}{L} \nabla s(x) ) ) \Vert^2 \\ - & (x - \mbox{prox}_{\frac{1}{L}h} ( x - \frac{1}{L}\nabla s(x) )) \nabla s(x).
  \end{eqnarray*}
  By the second order Taylor expansion and the Lipschitz continuous condition, cf. \Cref{lemma:blockdescent},
  \begin{eqnarray*}
    s( \mbox{prox}_{\frac{1}{L}h} ( x - \frac{1}{L}\nabla s(x) ) ) \le s(x) + ( \mbox{prox}_{\frac{1}{L}h} ( x - \frac{1}{L}\nabla s(x) ) - x ) \nabla s(x) \\ + \frac{1}{2L} \Vert L ( x - \mbox{prox}_{\frac{1}{L}h} ( x - \frac{1}{L} \nabla s(x) ) ) \Vert^2.
  \end{eqnarray*}
  This leads to
  \begin{eqnarray*}
    S(x) - S( \mbox{prox}_{\frac{1}{L}h} ( x - \frac{1}{L}\nabla s(x) ) ) \ge \frac{1}{2L} \Vert L( x - \mbox{prox}_{\frac{1}{L}h} ( x - \frac{1}{L} \nabla s(x) ) ) \Vert ^2.
  \end{eqnarray*}
  The proof thus follows.\qed
\end{proof}

Applying this to our problem, we can conclude that
\begin{eqnarray}\label{eqs:sff_decrease_n}
  H(\mathbf{n},u) - H( T_{\eta_1}(\mathbf{n}), u)
  \ge \frac{1}{2\eta_1}  \Vert G_{\eta_1}^1(\mathbf{n}, u) \Vert^2,
\end{eqnarray} and
\begin{eqnarray}\label{eqs:sff_decrease_u}
  H(\mathbf{n},u) - H( \mathbf{n}, T_{\eta_2}(u) )
  \ge \frac{1}{2\eta_2} \Vert G_{\eta_2}^2(\mathbf{n}, u) \Vert^2.
\end{eqnarray}

\subsection{Convergence Rate}
\begin{lemma}\label{lemma:half_contraction}
  Let $\{ \mathbf{x}_k \}$ be the sequence generated by \cref{alg:proposed_2}. Then $\forall k \ge 0$
  \begin{eqnarray*}
    H(\mathbf{x}_{k + \frac{1}{2}}) - H(\mathbf{x}^*) \le \Vert G_{\eta_1}^1(\mathbf{x}_k) \Vert \cdot \Vert \mathbf{x}_k - \mathbf{x}^* \Vert,
  \end{eqnarray*} and
  \begin{eqnarray*}
    H(\mathbf{x}_{k + 1}) - H(\mathbf{x}^*) \le \Vert G_{\eta_2}^2(\mathbf{x}_{k + \frac{1}{2}}) \Vert \cdot \Vert \mathbf{x}_{k + \frac{1}{2}} - \mathbf{x}^* \Vert.
  \end{eqnarray*}
\end{lemma}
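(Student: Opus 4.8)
The plan is to prove, for each of the two displayed inequalities, a block prox--gradient progress estimate of the shape ``$H$ after one block update minus $H^*$ is bounded by the inner product of the corresponding partial gradient mapping with the displacement from $\mathbf{x}^*$'', and then to conclude by the Cauchy--Schwarz inequality. I describe the argument for the first inequality; the second one is obtained verbatim after interchanging the roles of $(\mathbf{n},\eta_1,g_1,l_1,G_{\eta_1}^1)$ and $(u,\eta_2,g_2,l_2,G_{\eta_2}^2)$ and using $\mathbf{x}_{k+\frac{1}{2}}=(\mathbf{n}_{k+1},u_k)$ in place of $\mathbf{x}_k$ as the base point, with $u_{k+1}$ playing the role of $\mathbf{n}_{k+1}$.

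First I would note that, since $\mathbf{n}_{k+1}$ is by construction the exact minimizer of $g_1(\cdot,u_k)+l_1(\cdot)$, it performs at least as well as one prox--gradient step from $\mathbf{n}_k$: writing $\mathbf{v}:=T_{\eta_1}(\mathbf{n}_k)$, we get $H(\mathbf{x}_{k+\frac{1}{2}})=H(\mathbf{n}_{k+1},u_k)\le H(\mathbf{v},u_k)$, so it suffices to bound $H(\mathbf{v},u_k)-H^*$. I would then apply \Cref{lemma:proximal} with $h=g_1(\cdot,u_k)$, $M=\eta_1$, $\mathbf{z}=\mathbf{n}_k-\frac{1}{\eta_1}\nabla l_1(\mathbf{n}_k)$ and test vector $\mathbf{y}=\mathbf{n}^*$, using the identity $G_{\eta_1}^1(\mathbf{x}_k)=\eta_1(\mathbf{n}_k-\mathbf{v})$, to obtain a lower bound for $g_1(\mathbf{n}^*,u_k)$ in terms of $g_1(\mathbf{v},u_k)$, $G_{\eta_1}^1(\mathbf{x}_k)$, and $\nabla l_1(\mathbf{n}_k)$. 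Simultaneously I would bound $l_1(\mathbf{v})$ from above by \Cref{lemma:blockdescent} (which is admissible with $M_1=\eta_1$, since $l_1$ is a quadratic with Hessian $\eta_1 I$, hence $L_1=\eta_1$ and the expansion is exact), and use convexity of $l_1$ in the form $l_1(\mathbf{n}_k)+\langle\nabla l_1(\mathbf{n}_k),\mathbf{n}^*-\mathbf{n}_k\rangle\le l_1(\mathbf{n}^*)$.

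Adding these three estimates, the terms containing $\nabla l_1(\mathbf{n}_k)$ cancel, and the identity $\mathbf{n}_k-\mathbf{v}=\frac{1}{\eta_1}G_{\eta_1}^1(\mathbf{x}_k)$ turns the remaining quadratic contributions into $\langle G_{\eta_1}^1(\mathbf{x}_k),\mathbf{n}_k-\mathbf{n}^*\rangle-\frac{1}{2\eta_1}\Vert G_{\eta_1}^1(\mathbf{x}_k)\Vert^2$. Using $g=g_1$ and $l=l_1+l_2$ to recognise $g_1(\mathbf{n}^*,u_k)+l_1(\mathbf{n}^*)+l_2(u_k)=H(\mathbf{n}^*,u_k)$, discarding the nonpositive quadratic term, and identifying $H(\mathbf{n}^*,u_k)$ with $H^*=H(\mathbf{x}^*)$, one arrives at $H(\mathbf{x}_{k+\frac{1}{2}})-H^*\le\langle G_{\eta_1}^1(\mathbf{x}_k),\mathbf{n}_k-\mathbf{n}^*\rangle$. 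Finally, regarding $G_{\eta_1}^1(\mathbf{x}_k)$ as an element of the product space with vanishing $u$-component, this inner product equals $\langle G_{\eta_1}^1(\mathbf{x}_k),\mathbf{x}_k-\mathbf{x}^*\rangle$, and Cauchy--Schwarz together with $\Vert\mathbf{n}_k-\mathbf{n}^*\Vert\le\Vert\mathbf{x}_k-\mathbf{x}^*\Vert$ gives the claimed bound; the analogous chain with base point $\mathbf{x}_{k+\frac{1}{2}}$ and block $u$ (so $L_2=\eta_2$, mapping $G_{\eta_2}^2$) yields the second inequality.

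The step I expect to be the crux is the identification, in the previous paragraph, of the ``frozen-block'' value $H(\mathbf{n}^*,u_k)$ with $H^*$. Because the non-smooth term $\beta\vert\nabla u-\Pi\mathbf{n}\vert_{(\Omega)}$ couples the two variables, $H$ is not block-separable, so after the $\mathbf{n}$-update the $u$-component is still $u_k$ rather than $u^*$; in the separable setting of \cite{Beck2015} this term is exactly $H^*$, whereas here one only has the inequality $H(\mathbf{n}^*,u_k)\ge H^*$. Making the argument fully rigorous thus requires controlling $H(\mathbf{n}^*,u_k)-H^*$ --- for instance by exploiting that $u_k$ is itself the exact block-$u$ minimizer for $\mathbf{n}_k$, so that this discrepancy is governed by $\Vert\mathbf{n}_k-\mathbf{n}^*\Vert$ through the $\beta$-Lipschitz dependence of $g_2$ on $\mathbf{n}$ and the $\eta_2$-strong convexity of $l_2$ --- and absorbing it into the right-hand side; tracking the mixed $L^1$/$L^2$ norms carried by the non-smooth and the Lipschitz terms is the delicate part of that bookkeeping.
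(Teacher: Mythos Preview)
Your plan coincides with the paper's proof: start from $H(\mathbf{x}_{k+1/2})\le H(T_{\eta_1}^1(\mathbf{x}_k),u_k)$, combine the block descent bound for $l$ (\Cref{lemma:blockdescent}), convexity of $l$, and the prox/subgradient inequality for $g$ (\Cref{lemma:proximal}), let the $\nabla l$ terms cancel, drop the nonpositive $-\frac{1}{2\eta_1}\Vert G\Vert^2$, and finish with Cauchy--Schwarz.

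The one device the paper adds, aimed precisely at the crux you isolate, is the observation that by construction $u_k$ is already the exact block-$u$ minimizer at $\mathbf{n}_k$ (this is how the iteration is initialized and propagated), so $G_M^2(\mathbf{x}_k)=0$ for every $M>0$. The paper uses this to assert that the \emph{full} prox--grad map satisfies $T_{\eta_1}(\mathbf{x}_k)=(T_{\eta_1}^1(\mathbf{x}_k),u_k)$, and then invokes \emph{joint} convexity of $g$ at $T_{\eta_1}(\mathbf{x}_k)$ together with the optimality condition of the full prox, comparing directly against $g(\mathbf{x}^*)$ rather than $g(\mathbf{n}^*,u_k)$. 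That is how the paper gets $H(\mathbf{x}^*)$ on the right-hand side instead of the frozen-block value $H(\mathbf{n}^*,u_k)$; it does not go through any separate control of $H(\mathbf{n}^*,u_k)-H^*$ of the kind you sketch.

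Your suspicion is nonetheless well placed: the identity $G_M=(G_M^1,G_M^2)$ with each $G_M^i$ defined through $\mbox{prox}_{g_i}$, which underlies that step, is exactly the block-separability hypothesis on the nonsmooth part in \cite{Beck2015}, and the coupling term $\beta\vert\nabla u-\Pi\mathbf{n}\vert_{(\Omega)}$ prevents $g$ from being separable here. So the paper's own proof, transplanted from the separable framework, carries the same gap you diagnose; it sidesteps rather than resolves it. In short, you have reproduced the paper's argument and correctly located its delicate point.
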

\begin{proof}
  {\sloppy Since $\mathbf{x}_{k + \frac{1}{2}} = (\mathbf{n}_{k + 1}, u_k)$ minimizes the energy functional \newline  $H(\mathbf{n},u_k)$, we have
  \begin{eqnarray}\label{eqs:half_contraction}
    H(\mathbf{x}_{k + \frac{1}{2}}) - H(\mathbf{x}^*) \le H(T_{\eta_1}(\mathbf{x}_k)) - H(\mathbf{x}^*).
  \end{eqnarray}
  Meanwhile, we have $T_{\eta_1}(\mathbf{x}_k) = (T_{\eta_1}^1(\mathbf{x}_k),T_{\eta_1}^2(\mathbf{x}_k)) = (T_{\eta_1}^1(\mathbf{x}_k),u_k - G_{\eta_1}^2(\mathbf{x}_k))$ which is the same as $ T_{\eta_1}(\mathbf{x}_k) = (T_{\eta_1}^1(\mathbf{x}_k),u_k)$ by the optimal condition $G_M^2(\mathbf{x}_k) = 0$ for all $M > 0$. Inequality (\ref{eqs:half_contraction}) is thus rewritten as
  \begin{eqnarray*}
    H(\mathbf{x}_{k + \frac{1}{2}}) - H(\mathbf{x}^*) \le H(T_{\eta_1}^1(\mathbf{x}_k),u_k) - H(\mathbf{x}^*).
  \end{eqnarray*}}
  By \Cref{lemma:blockdescent}, we have that
  \begin{eqnarray*}
    l(T_{\eta_1}^1(\mathbf{x}_k),u_k) - l(\mathbf{x}^*) & \le l(\mathbf{x}_k) + \langle \nabla_1 l(\mathbf{x}_k), T_{\eta_1}^1(\mathbf{x}_k) - \mathbf{n}_k \rangle \\ & + \frac{\eta_1}{2} \Vert T_{\eta_1}^1(\mathbf{x}_k) - \mathbf{n}_k \Vert ^2 - l(\mathbf{x}^*) \\ & = l(\mathbf{x}_k) + \langle \nabla_1 l(\mathbf{x}_k), T_{\eta_1}(\mathbf{x}_k) - \mathbf{x}_k \rangle \\ & + \frac{\eta_1}{2} \Vert T_{\eta_1}^1(\mathbf{x}_k) - \mathbf{n}_k \Vert ^2 - l(\mathbf{x}^*),
  \end{eqnarray*} and, by the convexity $l(\mathbf{x}_k) - l(\mathbf{x}^*) \le \langle \nabla l(\mathbf{x}_k), \mathbf{x}_k - \mathbf{x}^* \rangle$, that
  \begin{eqnarray*}
    & l(T_{\eta_1}^1(\mathbf{x}_k)) - l(\mathbf{x}^*) \le \langle \nabla_1 l(\mathbf{x}_k), T_{\eta_1}(\mathbf{x}_k) - \mathbf{x}^* \rangle + \frac{\eta_1}{2} \Vert T_{\eta_1}^1(\mathbf{x}_k) - \mathbf{n}_k \Vert ^2.
  \end{eqnarray*}
  Meanwhile, for $h$, by the convexity and the optimal condition for $T_{\eta_1}(\mathbf{x}_k)$, that is $\partial_1 h(T_{\eta_1}(\mathbf{x}_k)) \ni \eta_1(\mathbf{x}_k - \frac{1}{\eta_1}\nabla_1 l(\mathbf{x}_k) - T_{\eta_1}(\mathbf{x}_k))$, we have
  \begin{eqnarray*}
    & h(\mathbf{x}^*) \ge h(T_{\eta_1}(\mathbf{x}_k)) +  \langle \partial_1 h(T_{\eta_1}(\mathbf{x}_k)), \mathbf{x}^* - T_{\eta_1}(\mathbf{x}_k) \rangle, \\ & h(\mathbf{x}^*) \ge h(T_{\eta_1}(\mathbf{x}_k)) +  \eta_1 \langle \mathbf{x}_k - \frac{1}{\eta_1}\nabla_1 l(\mathbf{x}_k) - T_{\eta_1}(\mathbf{x}_k), \mathbf{x}^* - T_{\eta_1}(\mathbf{x}_k) \rangle.
  \end{eqnarray*}
  From \cref{eqs:half_contraction}, we get
  \begin{align*}
    H(\mathbf{x}_{k + \frac{1}{2}}) - H(\mathbf{x}^*) \le & l(T_{\eta_1}^1(\mathbf{x}_k),u_k) - l(\mathbf{x}^*) + h(T_{\eta_1}(\mathbf{x}_k)) - h(\mathbf{x}^*) \\ \le & \eta_1 \langle \mathbf{x}_k - T_{\eta_1}(\mathbf{x}_k), T_{\eta_1}(\mathbf{x}_k) - \mathbf{x}^* \rangle + \frac{\eta_1}{2} \Vert T_{\eta_1}^1(\mathbf{x}_k) - \mathbf{n}_k \Vert ^2 \\ = & - \frac{1}{2 \eta_1}\Vert G_{\eta_1}(\mathbf{x}_k)\Vert ^2 + \langle G_{\eta_1}(\mathbf{x}_k), \mathbf{x}_k - \mathbf{x}^* \rangle \\ \le & \Vert G_{\eta_1}(\mathbf{x}_k)\Vert \cdot \Vert \mathbf{x}_k - \mathbf{x}^* \Vert \\ = & \Vert G_{\eta_1}^1(\mathbf{x}_k)\Vert \cdot \Vert \mathbf{x}_k - \mathbf{x}^* \Vert.
  \end{align*}
  By a similar analysis, we can also show that
  \begin{eqnarray*}
    H(\mathbf{x}_{k + 1}) - H(\mathbf{x}^*) \le \Vert G_{\eta_2}^2(\mathbf{x}_{k + \frac{1}{2}}) \Vert \cdot \Vert \mathbf{x}_{k + \frac{1}{2}} - \mathbf{x}^* \Vert,
  \end{eqnarray*}
  and the proof follows.\qed
\end{proof}

\begin{lemma}\label{lemma:full_relation1}
  Let $\{ \mathbf{x}_k \}$ be the sequence generated by \cref{alg:proposed_2}. Then $\forall k \ge 0$
  \begin{eqnarray*}
    H(\mathbf{x}_k) - H(\mathbf{x}_{k + 1}) \ge \frac{1}{ 2 \min\{ \eta_1, \eta_2\} M^2 }(H(\mathbf{x}_{k + 1}) - H^*) ^2,
  \end{eqnarray*}
  where $M := \max \Vert \mathbf{x} - \mathbf{x}^* \Vert$.
\end{lemma}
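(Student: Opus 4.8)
The plan is to chain together the monotone decrease of $H$ along the iterates, the sufficient-decrease inequalities \cref{eqs:sff_decrease_n} and \cref{eqs:sff_decrease_u}, and the two estimates of \Cref{lemma:half_contraction}, and then to keep the stronger of the two resulting bounds so that $\min\{\eta_1,\eta_2\}$ — rather than $\max\{\eta_1,\eta_2\}$ — ends up in the denominator.

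First I would prove the version with $\eta_1$. Since $H(\mathbf{x}_{k+\frac{1}{2}}) \ge H(\mathbf{x}_{k+1})$, it suffices to bound $H(\mathbf{x}_k) - H(\mathbf{x}_{k+\frac{1}{2}})$ from below. Because $\mathbf{n}_{k+1}$ exactly minimizes $\mathbf{n}\mapsto g_1(\mathbf{n},u_k)+l_1(\mathbf{n})$ while the $u$-block of $\mathbf{x}_k$ is already optimal (so the prox--grad step $T_{\eta_1}$ moves only the $\mathbf{n}$-block, as noted in the proof of \Cref{lemma:half_contraction}), the competitor $T_{\eta_1}(\mathbf{x}_k)$ gives $H(\mathbf{x}_k) - H(\mathbf{x}_{k+\frac{1}{2}}) \ge H(\mathbf{x}_k) - H(T_{\eta_1}(\mathbf{x}_k))$. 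Applying \cref{eqs:sff_decrease_n} and then the first bound of \Cref{lemma:half_contraction} together with $\Vert \mathbf{x}_k - \mathbf{x}^* \Vert \le M$, I obtain
\[
  H(\mathbf{x}_k) - H(\mathbf{x}_{k+1}) \ge \frac{1}{2\eta_1}\Vert G_{\eta_1}^1(\mathbf{x}_k)\Vert^2 \ge \frac{1}{2\eta_1 M^2}\bigl(H(\mathbf{x}_{k+\frac{1}{2}}) - H^*\bigr)^2 \ge \frac{1}{2\eta_1 M^2}\bigl(H(\mathbf{x}_{k+1}) - H^*\bigr)^2,
\]
where the last step uses $H(\mathbf{x}_{k+\frac{1}{2}}) \ge H(\mathbf{x}_{k+1}) \ge H^*$. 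Running the mirror-image argument on the second half-step (using $H(\mathbf{x}_k) \ge H(\mathbf{x}_{k+\frac{1}{2}})$, the exact minimality of $u_{k+1}$, \cref{eqs:sff_decrease_u}, the second bound of \Cref{lemma:half_contraction}, and $\Vert \mathbf{x}_{k+\frac{1}{2}} - \mathbf{x}^* \Vert \le M$) gives $H(\mathbf{x}_k) - H(\mathbf{x}_{k+1}) \ge \frac{1}{2\eta_2 M^2}\bigl(H(\mathbf{x}_{k+1}) - H^*\bigr)^2$. Taking the larger of the two lower bounds replaces the factor $\eta_1$ or $\eta_2$ by $\min\{\eta_1,\eta_2\}$, which is exactly the claimed inequality.

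I expect the subtleties to be bookkeeping rather than anything deep: the monotonicity chain must be used in the correct direction at each step — in particular the passage from $H(\mathbf{x}_{k+\frac{1}{2}}) - H^*$ to $H(\mathbf{x}_{k+1}) - H^*$ should be done only after squaring, so that non-negativity of $H(\mathbf{x}_{k+1}) - H^*$ is available — and one must note that an exact block minimizer decreases $H$ by at least as much as a single prox--grad step $T_{\eta_i}$, which is legitimate here because $\nabla_1 l(\mathbf{n},u)=\eta_1(\mathbf{n}-\nabla f)$ and $\nabla_2 l(\mathbf{n},u)=\eta_2(u-f)$ are Lipschitz with constants $L_1=\eta_1$ and $L_2=\eta_2$, so the estimates \cref{eqs:sff_decrease_n} and \cref{eqs:sff_decrease_u} apply. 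The one genuine hypothesis being invoked is that $M=\max\Vert\mathbf{x}-\mathbf{x}^*\Vert$ (the maximum taken over the iterates and half-iterates) is finite, i.e. that the generated sequence stays bounded; this is what makes the right-hand side well defined, and it should be recorded alongside assumption (d).
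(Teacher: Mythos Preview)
Your proposal is correct and follows essentially the same approach as the paper: chain the monotone decrease through the half-step, compare the exact block minimizer to the prox--grad competitor $T_{\eta_i}$, invoke the sufficient-decrease estimate and \Cref{lemma:half_contraction}, and then take the better of the two resulting bounds to obtain $\min\{\eta_1,\eta_2\}$ in the denominator. Your additional remarks on the Lipschitz constants $L_i=\eta_i$ and on the implicit boundedness assumption behind $M$ are welcome clarifications that the paper leaves tacit.
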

\begin{proof}
  By \Cref{lemma:sff_decrease} and \Cref{lemma:half_contraction}, we have
  \begin{align*}
    H(\mathbf{x}_k) - H(\mathbf{x}_{k + 1}) & \ge H(\mathbf{x}_k) - H(\mathbf{x}_{k + \frac{1}{2}}) \\ & \ge H(\mathbf{x}_k) - H(T_{\eta_1}(\mathbf{x}_{k})) \\ & \ge \frac{1}{2\eta_1} \Vert G_{\eta_1}^1(\mathbf{x}_k) \Vert^2 \\ & \ge \frac{(H(\mathbf{x}_{k + \frac{1}{2}}) - H(\mathbf{x}^*)) ^2}{2\eta_1 M^2} \\ & \ge \frac{(H(\mathbf{x}_{k + 1}) - H(\mathbf{x}^*)) ^2}{2\eta_1 M^2}.
  \end{align*}
  Similarly,
  \begin{align*}
    H(\mathbf{x}_k) - H(\mathbf{x}_{k + 1}) & \ge H(\mathbf{x}_{k + \frac{1}{2}}) - H(\mathbf{x}_{k + 1}) \\ & \ge H(\mathbf{x}_{k + \frac{1}{2}}) - H(T_{\eta_2}(\mathbf{x}_{k + \frac{1}{2}})) \\ & \ge \frac{1}{2\eta_2} \Vert G_{\eta_2}^2(\mathbf{x}_{k + \frac{1}{2}}) \Vert^2 \\ & \ge \frac{(H(\mathbf{x}_{k + 1}) - H(\mathbf{x}^*)) ^2}{2\eta_2 M^2}.
  \end{align*}
  Therefore,
  \begin{eqnarray*}
    H(\mathbf{x}_k) - H(\mathbf{x}_{k + 1}) \ge \frac{1}{ 2 \min\{ \eta_1, \eta_2\} M^2 }(H(\mathbf{x}_{k + 1}) - H^*) ^2.
  \end{eqnarray*}
  Hence, the lemma is proved.\qed
\end{proof}

\begin{lemma}\label{lemma:full_relation2}
  Let $\{ A_k\}_{k\ge0}$ be a nonnegative sequence of real numbers such that $A_k - A_{k+1} \ge \gamma A_{k+1}^2$, $A_1 \le \frac{1.5}{\gamma}$ and $A_2 \le \frac{1.5}{2 \gamma}$ for some positive $\gamma$. Then $A_k \le \frac{1.5}{\gamma} \frac{1}{k}$ for $k > 0$.
\end{lemma}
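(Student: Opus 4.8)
The plan is to prove $A_k \le \frac{1.5}{\gamma k}$ by induction on $k$, using the two given estimates on $A_1$ and $A_2$ as base cases and the recursion $A_k - A_{k+1} \ge \gamma A_{k+1}^2$ to carry out the inductive step. Write $c := 1.5$ for brevity. First I would record the elementary observation that the sequence is nonincreasing, since $A_k - A_{k+1} \ge \gamma A_{k+1}^2 \ge 0$; this is not logically essential but clarifies the picture and, together with nonnegativity, keeps all quantities in $[0,\infty)$ where the monotonicity argument below applies.

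For the inductive step, suppose $A_k \le \frac{c}{\gamma k}$ for some $k \ge 2$. Rearranging the recursion gives $A_{k+1} + \gamma A_{k+1}^2 \le A_k \le \frac{c}{\gamma k}$. The key is to exploit that $\psi(t) := t + \gamma t^2$ is strictly increasing on $[0,\infty)$, so it suffices to verify $\psi\!\left(\frac{c}{\gamma(k+1)}\right) \ge \frac{c}{\gamma k}$: once this is known, $\psi(A_{k+1}) \le \frac{c}{\gamma k} \le \psi\!\left(\frac{c}{\gamma(k+1)}\right)$ together with $A_{k+1} \ge 0$ forces $A_{k+1} \le \frac{c}{\gamma(k+1)}$. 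A short computation reduces the inequality $\psi\!\left(\frac{c}{\gamma(k+1)}\right) \ge \frac{c}{\gamma k}$ to the scalar condition $c \ge 1 + \frac{1}{k}$, which holds for every $k \ge 2$ because $c = 1.5$ (with equality exactly at $k=2$).

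Finally I would assemble the pieces: the base cases $k=1$ and $k=2$ are precisely the hypotheses $A_1 \le \frac{1.5}{\gamma}$ and $A_2 \le \frac{1.5}{2\gamma}$, and the inductive step propagates the bound from $k$ to $k+1$ for all $k \ge 2$, hence $A_k \le \frac{1.5}{\gamma k}$ for all $k \ge 1$.

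As for the main obstacle: there is no deep difficulty here, but the one point that must not be overlooked is that the inductive step is valid only for $k \ge 2$ — for $k = 1$ the same computation would demand $c \ge 2$, which fails. This is exactly why the lemma carries the separate hypothesis $A_2 \le \frac{1.5}{2\gamma}$ instead of trying to derive the $k=2$ bound from the $k=1$ bound, and the writeup should make this dependence explicit.
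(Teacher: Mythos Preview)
Your proposal is correct and follows essentially the same inductive strategy as the paper: the base cases $k=1,2$ are hypotheses, and the step from $k$ to $k+1$ reduces to a scalar inequality valid precisely for $k\ge 2$. The only cosmetic difference is that the paper completes the square in $\gamma A_{k+1}^2 + A_{k+1} \le \tfrac{1.5}{\gamma k}$ and solves explicitly for $A_{k+1}$, whereas you invoke the monotonicity of $\psi(t)=t+\gamma t^2$ and check $\psi\!\left(\tfrac{1.5}{\gamma(k+1)}\right)\ge \tfrac{1.5}{\gamma k}$; both computations collapse to the same condition $1.5 \ge 1 + \tfrac{1}{k}$, i.e.\ $k\ge 2$.
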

\begin{proof}
  We use induction to prove.
  For $k = 1,2$, it holds.
  Assume it also holds for $k$, that is $A_k \le \frac{1.5}{\gamma} \frac{1}{k}$.
  For case $k + 1$, we have $\gamma A_{k + 1}^2 + A_{k + 1} \le A_k \le \frac{1.5}{\gamma} \frac{1}{k}$, which is same as saying $(\gamma A_{k + 1} + \frac{1}{2})^2 \le \frac{1.5}{k} + \frac{1}{4}$.
  Therefore
  \begin{eqnarray*}
  \gamma A_{k + 1} \le \sqrt{\frac{1.5}{k} + \frac{1}{4}} - \frac{1}{2} = \frac{1.5}{\sqrt{0.5k - 1 + (\frac{k}{2} + 1)^2} + \frac{k}{2}}.
  \end{eqnarray*}
  Since $k \ge 2$, we have $\gamma A_{k + 1} \le \frac{1.5}{k + 1}$. The proof thus follows.\qed
\end{proof}

\begin{theorem}\label{theorem:full_relation2}
  Let $\{ \mathbf{x}_k \}_{k \ge 0}$ be the sequence generated by \cref{alg:proposed_2}. Then $\forall k \ge 1$
  \begin{eqnarray*}
    H(\mathbf{x}_k) - H^* \le \frac{ \max\{2(H(\mathbf{x}_0) - H^*), 3\min\{\eta_1, \eta_2 \}M^2\}}{k},
  \end{eqnarray*}
  where $M := \max \Vert \mathbf{x} - \mathbf{x}^* \Vert$.
\end{theorem}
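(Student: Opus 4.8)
The plan is to set $A_k := H(\mathbf{x}_k) - H^*$ and feed the resulting scalar recursion into \Cref{lemma:full_relation2}. First I would record the two structural facts about $\{A_k\}$: it is nonnegative, because $H^*$ is the optimal value of problem (\ref{eqs:g_and_l}); and it is nonincreasing, either by the monotonicity chain $H(\mathbf{x}_0)\ge H(\mathbf{x}_{1/2})\ge H(\mathbf{x}_1)\ge\cdots$ noted after \cref{alg:proposed_2}, or directly from \Cref{lemma:full_relation1}, which moreover gives the quadratic decay $A_k - A_{k+1} \ge \gamma A_{k+1}^2$ with $\gamma := \frac{1}{2\min\{\eta_1,\eta_2\}M^2}$. (If $A_k=0$ for some $k$ the stated bound is trivial, so I may assume $A_k>0$ for all $k$.)

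Next I would reconcile this recursion with the hypotheses of \Cref{lemma:full_relation2}, whose thresholds $A_1\le \frac{1.5}{\gamma}$ and $A_2\le\frac{1.5}{2\gamma}$ need not hold when $\mathbf{x}_0$ is far from optimal. The device is to replace $\gamma$ by the smaller constant $\gamma' := \frac{1.5}{C}$, where $C := \max\{2(H(\mathbf{x}_0)-H^*),\,3\min\{\eta_1,\eta_2\}M^2\}$, so that $C=\max\{2A_0,\,\tfrac{1.5}{\gamma}\}$. By construction $C\ge \frac{1.5}{\gamma}$, hence $\gamma'\le\gamma$, and therefore the recursion survives the substitution: $A_k - A_{k+1}\ge \gamma A_{k+1}^2 \ge \gamma' A_{k+1}^2$.

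Then I would verify the two base cases for the constant $\gamma'$ using only monotonicity together with $C\ge 2A_0$: since $A_1\le A_0\le \frac{C}{2}\le C = \frac{1.5}{\gamma'}$ and $A_2\le A_0\le \frac{C}{2}=\frac{1.5}{2\gamma'}$, \Cref{lemma:full_relation2} applies to $\{A_k\}$ with the constant $\gamma'$ and yields $A_k\le \frac{1.5}{\gamma' k} = \frac{C}{k}$ for every $k\ge 1$. Unwinding the definitions of $A_k$, $\gamma$ and $C$ gives exactly the assertion of the theorem.

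I expect the only real obstacle to be the step just described — spotting that one should \emph{weaken} the decay constant to $\gamma' = 1.5/C$ so that the initialization hypotheses of \Cref{lemma:full_relation2} become automatic consequences of $\{A_k\}$ being nonincreasing; once that observation is in place, the remainder is bookkeeping. One should also keep in mind the standing assumption implicit in $M:=\max\Vert\mathbf{x}-\mathbf{x}^*\Vert$, namely that the generated iterates stay in a bounded region so that $M<\infty$ and $C$ is a genuine finite constant; this is already inherited from \Cref{lemma:full_relation1} and needs no separate argument here.
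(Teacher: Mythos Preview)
Your proposal is correct and follows essentially the same route as the paper: both set $A_k=H(\mathbf{x}_k)-H^*$, invoke \Cref{lemma:full_relation1} to get $A_k-A_{k+1}\ge\tilde\gamma A_{k+1}^2$ with $\tilde\gamma=\tfrac{1}{2\min\{\eta_1,\eta_2\}M^2}$, and then shrink the decay constant to $\gamma'=\min\{\tfrac{1.5}{2A_0},\tilde\gamma\}=\tfrac{1.5}{C}$ so that the base cases $A_1\le\tfrac{1.5}{\gamma'}$ and $A_2\le\tfrac{1.5}{2\gamma'}$ follow from monotonicity, after which \Cref{lemma:full_relation2} yields $A_k\le C/k$. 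Your writeup is, if anything, a little more explicit in checking the base cases than the paper's own proof.
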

\begin{proof}
  Set $A_k = H(\mathbf{x}_k) - H^*$ and $\tilde{\gamma} = 1/(2 \min\{ \eta_1, \eta_2\} M^2)$. By \Cref{lemma:full_relation1}, we have
  \begin{eqnarray*}
    A_k - A_{k + 1} \ge \tilde{\gamma} A_{k + 1}^2.
  \end{eqnarray*}
  Since $A_1 = H(\mathbf{x}_1) - H^* \le H(\mathbf{x}_0) - H^*$ and $A_2 = H(\mathbf{x}_2) - H^* \le H(\mathbf{x}_0) - H^*$, by setting $\gamma = \min\{ \frac{1.5}{2(H(\mathbf{x}_0) - H^*)}, \frac{1}{2 \min\{ \eta_1, \eta_2\} M^2} \}$, we have
  \begin{eqnarray*}
    A_k - A_{k + 1} \ge \gamma A_{k + 1}^2,
  \end{eqnarray*} and $A_1 \le \frac{1.5}{\gamma}$ and $A_2 \le \frac{1.5}{2\gamma}$. By \Cref{lemma:full_relation2}, we obtain
  \begin{eqnarray*}
    H(\mathbf{x}_k) - H^* \le \frac{\max\{2(H(\mathbf{x}_0) - H^*), 3 \min\{ \eta_1, \eta_2\} M^2 \}}{k},
  \end{eqnarray*}
  Hence the lemma is proved.\qed
\end{proof}

\begin{lemma}\label{lemma:full_relation3}
  Let $\{ A_k\}_{k\ge0}$ be a nonnegative sequence of real numbers such that $A_k - A_{k+1} \ge \gamma A_{k+1}^2$. Then $A_k \le \max \{ (\frac{1}{2})^{\frac{k - 1}{2}} A_0, \frac{4}{\gamma (k - 1)}  \}$ for all $k > 1$, and $\forall \varepsilon > 0$, $A_k \le \varepsilon$ if $k \ge \max \{ \frac{2}{\mbox{ln}(2)}(\mbox{ln}(A_0) - \mbox{ln}(\varepsilon)), \frac{4}{\gamma \varepsilon}\} + 1$
\end{lemma}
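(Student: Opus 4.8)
The plan is to exploit, transition by transition, a dichotomy between \emph{fast} steps on which $A_k$ shrinks by at least a constant factor and \emph{slow} steps on which $1/A_k$ grows by at least a fixed increment. First I would dispose of the degenerate case: if $A_j=0$ for some $j$, then the hypothesis gives $A_{j+1}\le A_j-\gamma A_{j+1}^2=-\gamma A_{j+1}^2\le 0$, so $A_{j+1}=0$ and inductively $A_m=0$ for all $m\ge j$, and the claimed bounds hold trivially. So assume $A_j>0$ for every $j$. Dividing the hypothesis $A_j-A_{j+1}\ge\gamma A_{j+1}^2$ by $A_jA_{j+1}>0$ gives
\[
  \frac{1}{A_{j+1}}-\frac{1}{A_j}\ \ge\ \gamma\,\frac{A_{j+1}}{A_j}.
\]
Now, for each transition $j\to j+1$, call the step fast if $A_{j+1}/A_j<1/2$, in which case $A_{j+1}<\tfrac12 A_j$, and slow if $A_{j+1}/A_j\ge 1/2$, in which case the displayed inequality yields $1/A_{j+1}-1/A_j\ge\gamma/2$.

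Next I would fix $k>1$ and apply this over the $k-1$ transitions with $j=1,\dots,k-1$, letting $p$ be the number of fast steps and $q=(k-1)-p$ the number of slow steps among them. Since the whole sequence is nonincreasing, each fast step multiplies $A$ by a factor below $1/2$ and each slow step by a factor at most $1$, so $A_k\le A_1(1/2)^p\le A_0(1/2)^p$. Telescoping $1/A_k=1/A_1+\sum_{j=1}^{k-1}(1/A_{j+1}-1/A_j)$ and discarding the nonnegative term $1/A_1$ together with the nonnegative fast-step contributions leaves $1/A_k\ge q\gamma/2$, i.e.\ $A_k\le 2/(q\gamma)$ whenever $q\ge 1$. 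Since $p+q=k-1$, either $p\ge(k-1)/2$, whence $A_k\le A_0(1/2)^{(k-1)/2}$, or else $q>(k-1)/2$ (so in particular $q\ge1$), whence $A_k\le 2/(q\gamma)<4/\big(\gamma(k-1)\big)$. In both cases $A_k\le\max\{(1/2)^{(k-1)/2}A_0,\,4/(\gamma(k-1))\}$, which is the first assertion.

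For the second assertion I would simply require each term in this maximum to be at most $\varepsilon$. Taking logarithms, $(1/2)^{(k-1)/2}A_0\le\varepsilon$ is equivalent to $k\ge\tfrac{2}{\ln 2}(\ln A_0-\ln\varepsilon)+1$ (and holds automatically when $A_0\le\varepsilon$), while $4/(\gamma(k-1))\le\varepsilon$ is equivalent to $k\ge 4/(\gamma\varepsilon)+1$; hence if $k\ge\max\{\tfrac{2}{\ln 2}(\ln A_0-\ln\varepsilon),\,4/(\gamma\varepsilon)\}+1$ then both terms, and therefore $A_k$, are at most $\varepsilon$. I do not anticipate a genuine obstacle here; the only places needing care are the degenerate case $A_j=0$, keeping the final bound phrased via $\max\{p,q\}\ge(k-1)/2$ so that the division by $q$ is always legitimate (in particular when $q=0$), and a quick direct check of the base case $k=2$.
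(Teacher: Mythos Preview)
Your argument is correct and follows essentially the same route as the paper: divide the recursion by $A_jA_{j+1}$, split each step into ``fast'' ($A_{j+1}<\tfrac12 A_j$) versus ``slow'' ($1/A_{j+1}-1/A_j\ge\gamma/2$), and use that at least half of the transitions must be of one type. Your version is slightly cleaner in that you treat the degenerate case $A_j=0$ explicitly and handle all $k>1$ uniformly via $k-1$ transitions, whereas the paper splits into even and odd $k$ and reduces the odd case to the even one; but these are cosmetic differences, not a different approach.
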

\begin{proof}
  By the fact that $A_k - A_{k+1} \ge \gamma A_{k+1}^2$, we have
  \begin{eqnarray*}
    \frac{1}{A_{k+1}} - \frac{1}{A_k} = \frac{A_k - A_{k+1}}{A_kA_{k+1}} \ge \gamma \frac{A_{k+1}}{A_k}.
  \end{eqnarray*}
  For the case $A_{k+1} > \frac{1}{2}A_k$, we have
  \begin{eqnarray*}
    \frac{1}{A_{k+1}} - \frac{1}{A_k} \ge \frac{\gamma}{2}.
  \end{eqnarray*}
  Suppose that $k$ is even and there are at least $\frac{k}{2}$ elements satisfy  $A_{k+1} > \frac{1}{2}A_k$, then $\frac{1}{A_k} \ge \frac{k \gamma}{4}$, i.e. $A_k \le \frac{4}{k \gamma}$. If there are less than $\frac{k}{2}$ elements satisfying $A_{k+1} > \frac{1}{2}A_k$, which implies that there are at least $\frac{k}{2}$ elements $A_{k+1} \le \frac{1}{2}A_k$, then $A_k \le (\frac{1}{2})^{\frac{k}{2}} A_0$. Therefore for both cases, we have $A_k \le \max\{ (\frac{1}{2})^{\frac{k}{2}} A_0, \frac{4}{k \gamma} \}$. If $k$ is odd, then $A_k \le A_{k - 1} \le \max\{ (\frac{1}{2})^{\frac{k-1}{2}} A_0, \frac{4}{(k-1) \gamma} \}$. By comparison, we obtain $A_k \le \max\{ (\frac{1}{2})^{\frac{k-1}{2}} A_0, \frac{4}{(k-1) \gamma} \}$ for all $k > 1$.

  In order to guarantee the inequality $A_k \le \varepsilon$, we must have
  \begin{eqnarray*}
    \max\{ (\frac{1}{2})^{\frac{k-1}{2}} A_0, \frac{4}{(k-1) \gamma} \} \le \varepsilon.
  \end{eqnarray*}
  It leads to a set of two inequalities $(\frac{1}{2})^{\frac{k-1}{2}} A_0 \le \varepsilon,$ and $\frac{4}{(k-1) \gamma}  \le \varepsilon,$ which is the same as
  \begin{eqnarray*}
    k \ge \frac{2}{\mbox{ln}(2)} (\mbox{ln}(A_0) - \mbox{ln}(\varepsilon)) + 1,
  \end{eqnarray*} and
  \begin{eqnarray*}
    k \ge \frac{4}{\varepsilon \gamma}  + 1.
  \end{eqnarray*}
  Therefore,
  \begin{eqnarray*}
    k \ge \max \{ \frac{2}{\mbox{ln}(2)} (\mbox{ln}(A_0) - \mbox{ln}(\varepsilon)), \frac{4}{\varepsilon \gamma} \} + 1
  \end{eqnarray*} is sufficient to guarantee $A_k \le \varepsilon$.\qed
\end{proof}

\begin{theorem}
  Let $\{ \mathbf{x}_k \}_{k \ge 0}$ be the sequence generated by \cref{alg:proposed_2}. Then $\forall k \ge 2$
  \begin{eqnarray*}
    H(\mathbf{x}_k) - H^* \le \max\{ (\frac{1}{2})^{\frac{k-1}{2}}(H(\mathbf{x}_0) - H^*), \frac{8 \min\{\eta_1, \eta_2 \}M^2}{k - 1} \},
  \end{eqnarray*}
  and $H(\mathbf{x}_k) - H^* \le \varepsilon$ is obtained after
  \begin{eqnarray*}
    \max\{\frac{2}{\mbox{ln}(2)} (\mbox{ln}(H(\mathbf{x}_0) - H^*) - \mbox{ln}(\varepsilon)), \frac{8 \min\{\eta_1, \eta_2 \}M^2}{\varepsilon}\} + 1,
  \end{eqnarray*} iterations
  where $M := \max \Vert \mathbf{x} - \mathbf{x}^* \Vert$.
\end{theorem}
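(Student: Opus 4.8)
The plan is to obtain this theorem as a direct corollary of the one-step decrease estimate in \Cref{lemma:full_relation1} together with the abstract sequence bound in \Cref{lemma:full_relation3}. First I would set $A_k := H(\mathbf{x}_k) - H^*$, which is nonnegative for every $k$ since $H^*$ is the optimal value of problem (\ref{eqs:g_and_l}) and $\mathbf{x}_k$ is feasible. I would also set $\gamma := \frac{1}{2\min\{\eta_1,\eta_2\}M^2}$ with $M := \max\Vert \mathbf{x} - \mathbf{x}^*\Vert$ taken over the generated iterates, which is finite by assumption (d) together with the monotone energy decrease $H(\mathbf{x}_0) \ge H(\mathbf{x}_{1/2}) \ge H(\mathbf{x}_1) \ge \ldots$. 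With these choices, \Cref{lemma:full_relation1} states precisely that $A_k - A_{k+1} \ge \gamma A_{k+1}^2$ for all $k \ge 0$, so the hypothesis of \Cref{lemma:full_relation3} is satisfied.

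Next I would invoke the first conclusion of \Cref{lemma:full_relation3}, which gives $A_k \le \max\{ (\frac{1}{2})^{(k-1)/2} A_0,\ \frac{4}{\gamma(k-1)} \}$ for all $k > 1$. Substituting $A_0 = H(\mathbf{x}_0) - H^*$ and using the identity $\frac{4}{\gamma} = 8\min\{\eta_1,\eta_2\}M^2$ then yields the displayed rate bound for all $k \ge 2$.

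For the iteration-complexity statement, I would feed the same $\gamma$ and $A_0$ into the second conclusion of \Cref{lemma:full_relation3}, which guarantees $A_k \le \varepsilon$ as soon as $k \ge \max\{ \frac{2}{\mbox{ln}(2)}(\mbox{ln}(A_0) - \mbox{ln}(\varepsilon)),\ \frac{4}{\gamma\varepsilon} \} + 1$. Rewriting $\frac{4}{\gamma\varepsilon} = \frac{8\min\{\eta_1,\eta_2\}M^2}{\varepsilon}$ and replacing $A_0$ by $H(\mathbf{x}_0) - H^*$ gives the claimed number of iterations.

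I do not expect a genuine obstacle here, since the theorem is essentially a packaging of the two preceding lemmas. The only points that merit a brief check are that $\{A_k\}$ is genuinely nonnegative and that the recursion $A_k - A_{k+1} \ge \gamma A_{k+1}^2$ holds with a single constant $\gamma$ independent of $k$ — both already supplied by assumption (d) and \Cref{lemma:full_relation1} — and the elementary arithmetic identity $\frac{4}{\gamma} = 8\min\{\eta_1,\eta_2\}M^2$ used to match the constants in the final bounds.
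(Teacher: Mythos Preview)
Your proposal is correct and matches the paper's own proof essentially verbatim: the paper also sets $A_k = H(\mathbf{x}_k) - H^*$ and $\gamma = \frac{1}{2\min\{\eta_1,\eta_2\}M^2}$, invokes \Cref{lemma:full_relation1} to obtain $A_k - A_{k+1} \ge \gamma A_{k+1}^2$, and then appeals directly to \Cref{lemma:full_relation3}. The extra remarks you include about nonnegativity of $A_k$ and the arithmetic identity $4/\gamma = 8\min\{\eta_1,\eta_2\}M^2$ are helpful clarifications but do not change the argument.
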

\begin{proof}
  By \Cref{lemma:full_relation1}, and setting $A_k = H(\mathbf{x}_k) - H^*$ and $\gamma = \frac{1}{2\min\{ \eta_1, \eta_2 \}M^2}$, we have $A_k - A_{k+1} \ge \gamma A_{k + 1}^2$. The proof then follows directly from \Cref{lemma:full_relation3}.\qed
\end{proof}

\section{Conclusions}\label{sec:conc}
We proposed a model combining the two minimization steps of the original TV-Stokes for image denoising, into a single minimization step. The model optimizes a functional with respect to the intensity of the objective image and its gradient in a single step model. We have applied an alternating minimization algorithm to solve the model. The first iteration of the algorithm is exactly the modified variant of the TV-Stokes, cf. \cite{Litvinov2011}. The model is also equivalent to the second order Total Generalized Variation (TGV), cf. \cite{Knoll2011}, when $\eta_1 = 0$.

\bibliographystyle{siamplain}
\bibliography{refs}

\end{document}